\theoremstyle{plain} \numberwithin{equation}{section}
\newtheorem{thm}{Theorem}[section]
\newtheorem{cor}[thm]{Corollary}
\newtheorem{prop}[thm]{Proposition}
\newtheorem{lem}[thm]{Lemma}
\theoremstyle{definition}
\newtheorem{exam}[thm]{Example}
\theoremstyle{remark}
 \newtheorem{rem}{Remark}
\begin{document}
\title[Examples of quasitoric manifolds as special unitary manifolds]{\large \bf Examples of quasitoric manifolds as special unitary manifolds}
\author[Zhi L\"u and Wei Wang]{Zhi L\"u and Wei Wang}
\keywords{Unitary bordism, special unitary manifold, quasitoric manifold, small cover, Stong manifold.}
 \subjclass[2010]{ 57S10, 57R85, 14M25, 52B70.}
\thanks{Supported in part by grants from NSFC (No. 11371093, No. 11301335 and 11431009).}
\address{School of Mathematical Sciences, Fudan University, Shanghai,
200433, P.R. China.} \email{zlu@fudan.edu.cn}
\address{College of Information Technology, Shanghai Ocean University, 999 Hucheng Huan Road, 201306, Shanghai, P.R.  China}.
\email{weiwang@amss.ac.cn}

%\date{\today}

\begin{abstract}
This note shows that for each $n\geq 5$ with only $n\not= 6$, there exists a $2n$-dimensional specially omnioriented quasitoric manifold $M^{2n}$ which represents a nonzero element in $\Omega_*^U$. This provides the counterexamples of Buchstaber--Panov--Ray conjecture.
\end{abstract}

\maketitle

%\vskip .3cm

\section{Introduction}\label{int}

Let $\Omega_*^U$ denote the ring formed by the unitary bordism classes of all unitary manifolds, where a  unitary manifold is an  oriented closed smooth manifold whose  tangent bundle admits a stably complex structure. In~\cite{dj}, Davis and Januszkiewicz introduced and studied a class of nicely behaved manifolds $M^{2n}$, the so-called quasitoric manifolds (as  the topological versions of  toric varieties), each of which admits a locally
standard $T^n$-action such that the orbit space of the action is homeomorphic to a simple convex polytope. Buchstaber, Panov and Ray showed in~\cite{bpr1}  that each  quasitoric manifold with an omniorientation always admits a compatible tangential stably complex structure, so omnioriented quasitoric manifolds provide abundant examples of unitary manifolds. In particular, they also showed there that each class of $\Omega_{2n}^U$ contains an omnioriented quasitoric $2n$-manifold as its representative (see also~\cite{br}). In addition,  Buchstaber, Panov and Ray in~\cite{bpr} investigated the property of specially omnioriented quasitoric manifolds, and  proved that if $n<5$, then each $2n$-dimensional specially omnioriented quasitoric manifold represents the zero element in $\Omega_{2n}^U$, where the word ``specially" for a specially omnioriented quasitoric manifold means that  the first Chern class vanishes. Furthermore, they posed the following conjecture.

\vskip .2cm
\noindent {\bf Conjecture ($\star$):}{\em  Let $M^{2n}$ be a specially omnioriented quasitoric manifold. Then $M^{2n}$ represents the zero element in $\Omega_{2n}^U$.}

\vskip .2cm
The purpose of this note is to construct some examples of specially omnioriented quasitoric manifolds that are not bordant to zero in $\Omega_*^U$, which  give the negative answer to the above conjecture in almost all possible dimensional cases. Our main result is stated as follows.
\begin{thm}\label{main}
For each $n\geq 5$ with only $n\not= 6$, there exists a $2n$-dimensional specially omnioriented quasitoric manifold $M^{2n}$ which represents a nonzero element in $\Omega_*^U$.
\end{thm}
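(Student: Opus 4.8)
The plan is to reduce Theorem~\ref{main} to exhibiting an explicit family of quasitoric manifolds and then carrying out one Chern-number computation. Recall that $\Omega_*^U$ is torsion-free and is detected by Chern numbers, so $[M^{2n}]\neq 0$ in $\Omega_{2n}^U$ as soon as a single Chern number $c_\omega[M^{2n}]=\langle \prod_j c_{\omega_j}(M^{2n}),[M^{2n}]\rangle$ is non-zero. For a specially omnioriented quasitoric manifold $c_1=0$, and this kills $c_\omega[M^{2n}]$ for every partition $\omega$ of $n$ that has a part equal to $1$; so it is enough to make some Chern number indexed by a partition of $n$ into parts $\ge 2$ non-zero. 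Recall also the Davis--Januszkiewicz presentation $H^*(M^{2n};\Z)=\Z[x_1,\dots,x_m]/(\mathcal I_{SR}+\mathcal J)$, with $m$ the number of facets, $\mathcal I_{SR}$ the Stanley--Reisner ideal of the orbit polytope, and $\mathcal J$ spanned by the linear forms coming from the rows of the characteristic matrix $\Lambda$, together with the fact that the Buchstaber--Ray stably complex structure has $c(M^{2n})=\prod_{i=1}^m(1+x_i)$, hence $c_1(M^{2n})=\sum_i x_i$. Since flipping the omniorientation of a facet replaces $x_i$ by $-x_i$, a quasitoric manifold over $(P,\Lambda)$ admits a special omniorientation exactly when the row span of $\Lambda$ contains a $\pm 1$-vector.

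The first real task is to write down, for enough values of $n$, a simple polytope $P^n$, a characteristic function, and a special omniorientation --- for instance a direct construction when $n=5$ and a construction parametrized by $n$ for all $n\ge 7$. I would look among polytopes built from intervals and simplices (cubes $I^n$, products $\prod_j\Delta^{n_j}$, and combinatorial blow-ups of these), whose quasitoric manifolds are (generalized) Bott towers or iterated projective-bundle manifolds: for these $\mathcal I_{SR}$ is transparent and the cohomology ring collapses to a small quotient of a polynomial ring, and this is also the class that contains the Stong manifolds and has small-cover analogues useful for organizing the argument. The characteristic data $\Lambda_n$ must be arranged so that it is unimodular at every vertex of $P^n$ (so $M^{2n}$ is genuinely quasitoric), its row span contains a $\pm 1$-vector (so $c_1$ can be annihilated), and one still controls $c(M^{2n})=\prod(1+x_i)$ well enough to evaluate one Chern number. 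Passing to products is harmless here, since $\Omega_*^U$ is an integral domain (so a product of non-null-bordant manifolds is non-null-bordant) and a product of special omniorientations is special; note, however, that $6$ cannot be written as a sum of two or more integers each $\ge 5$, which is one reason the case $n=6$ is exceptional.

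The second task is the computation itself. Once $H^*(M^{2n})$ is an explicit ring, $c(M^{2n})$ is an explicit polynomial in a few generators; imposing $c_1=0$ and evaluating a chosen Chern number --- I would expect something like $c_2c_{n-2}[M^{2n}]$, $c_n[M^{2n}]$, or a suitable $s_\omega[M^{2n}]$ --- produces an explicit integer $N(n)$, presumably of binomial or factorial type. The argument is completed by an arithmetic lemma showing $N(n)\neq 0$ for every $n\ge 5$ with $n\neq 6$, with $N(6)=0$ the genuine exceptional vanishing.

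The hard part is the interplay of these two tasks. The constraint $c_1=0$ is very rigid and tends to force many of the Chern classes --- occasionally the entire total Chern class --- to become trivial, so the polytope and its characteristic matrix have to be engineered so that exactly one Chern number survives and is non-zero. Pinning down that the surviving integer $N(n)$ is non-zero for \emph{precisely} the stated set of $n$, rather than for a smaller one, is the delicate step, and is presumably also the reason the case $n=6$ (dimension $12$) is left open.
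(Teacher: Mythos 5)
Your outline correctly identifies the general framework (Chern numbers detect $\Omega_*^U$, the Davis--Januszkiewicz presentation, $c(M)=\prod_i(1+x_i)$, the sign-flip description of omniorientations, closure under products), but as written it contains no proof: the two ``tasks'' you isolate --- producing, for each admissible $n$, an explicit $(P^n,\Lambda_n)$ that is nonsingular at every vertex and admits a special omniorientation, and then exhibiting a nonvanishing Chern number $N(n)$ --- are exactly the content of the theorem, and you leave both open (``I would look among\dots'', ``I would expect something like\dots'', ``presumably of binomial or factorial type'', ``an arithmetic lemma showing $N(n)\neq 0$''). No polytope, no characteristic matrix, and no computed Chern number is actually given, and there is no evidence that a single family parametrized by $n\geq 7$ with a uniformly computable nonzero Chern number exists; as you yourself note, the constraint $c_1=0$ is rigid, and taming it is the hard step, not a step one can defer. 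Your heuristic for why $n=6$ is exceptional (that $6$ is not a sum of integers $\geq 5$) is also not the real obstruction: the paper's reason is that every orientable closed $6$-manifold bounds in $\mathfrak{N}_*$, which disables the detection mechanism below.

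That mechanism is the main divergence from the paper. For the infinite families the paper does \emph{not} compute Chern numbers at all. It uses the conjugation involution $\tau$ on $M(P^n,\lambda_2)$, whose fixed set is the small cover $M(P^n,\lambda_1)$ with $\lambda_1$ the mod~$2$ reduction of $\lambda_2$, together with $\{M\}=\{M^\tau\}^2$ in $\mathfrak{N}_*$; this yields a homomorphism $\Omega^U_{2n}\to\mathfrak{N}_n$, so it suffices to choose integer lifts (with signs arranged so every column of $\Lambda_2$ sums to $1$, i.e.\ a special omniorientation exists by Proposition~\ref{special}) of the characteristic matrices of the orientable, indecomposable Stong manifolds ${\Bbb R}P(2,0,\dots,0)$ and ${\Bbb R}P(4,2,0,\dots,0)$, which are nonzero in $\mathfrak{N}_*$ in dimensions $4l+5$ and $8l+11$; products then fill in all $n\geq 5$ except $n=6,7,8,12$, and only those three sporadic cases are handled the way you propose, by explicit quasitoric manifolds over $\Delta^4\times\Delta^3$, $\Delta^3\times\Delta^5$, $\Delta^3\times\Delta^9$ with the concrete Chern numbers $\langle c_3c_4,[M^{14}]\rangle=-2$, $\langle c_4^2,[M^{16}]\rangle=4$, $\langle c_6^2,[M^{24}]\rangle=64$. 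To turn your plan into a proof you would either have to supply such explicit data and computations for every $n$, or import a detection device of this kind; at present the argument has a genuine gap.
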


Our strategy is related to the unoriented bordism theory. Milnor's work tells us in \cite{m} (see also \cite{s1}) that there is an  epimorphism
\[\begin{CD}
F_*: \Omega^U_*@ >>> \mathfrak{N}^2_*
\end{CD}\]
here $\mathfrak{N}_*$ denotes the ring produced by the unoriented bordism classes of all smooth closed manifolds, which was studied clearly by Ren\'{e} Thom \cite{t}, and $\mathfrak{N}^2_*=\{\alpha^2| \alpha\in \mathfrak{N}_*\}$. This actually implies that there is a covering homomorphism
\[\begin{CD}
H_n: \Omega^U_{2n}@ >>> \mathfrak{N}_{n}
\end{CD}\]
which is induced by $\theta_n\circ F_n$, where $\theta_n:\mathfrak{N}^2_{n}\longrightarrow\mathfrak{N}_n$ is defined by mapping $\alpha^2\longmapsto \alpha$. One sees that $\theta_n$ is well-defined because $\mathfrak{N}$ is a $\mathbb{Z}_2$-polynomial ring with generators in dimensions $d \in {\Bbb Z}_{>0}$ for all
$d \not= 2^s-1$, which is an integral domain, and is the inverse of the Frobenius map $\mathfrak{N}\longrightarrow \mathfrak{N}^2\subset \mathfrak{N}$ given by $\alpha\longrightarrow \alpha^2$. On the other hand, Buchstaber and Ray tell us in~\cite{br} that each class of $\mathfrak{N}_n$ contains an $n$-dimensional small cover as its representative, where a small cover is also introduced by Davis and Januszkiewicz in~\cite{dj}, and it is  the real analogue of a quasitoric manifold. In addition, Davis and Januszkiewicz tell us in \cite{dj} that each quasitoric manifold $M^{2n}$ over a simple convex polytope $P^n$ always admits a natural conjugation involution $\tau$ whose fixed point set $M^\tau$ is just a small cover over $P^n$. In particular,  this conjugation involution   $\tau$ is independent of the choices of omniorientations on $M^{2n}$, and by \cite[Corollaries 6.7--6.8]{dj}, one has that the mod 2 reductions of all Chern numbers of $M^{2n}$ with an omniorientation determine all Stiefel--Whitney numbers of $M^\tau$, and in particular,  $\{M^{2n}\}=\{M^\tau\}^2$ as unoriented bordism classes in $\mathfrak{N}_*$.
Thus, $\tau$ induces a homomorphism $\phi_n^\tau: \Omega^U_{2n}\longrightarrow \mathfrak{N}_{n}$, which exactly agrees with the above homomorphism $H_n: \Omega^U_{2n}\longrightarrow \mathfrak{N}_{n}$.

\vskip .2cm

With the above understood, to obtain the counterexamples of Buchstaber--Panov--Ray conjecture, an approach is to construct
the examples of specially omnioriented quasitoric manifolds whose images under $\phi^\tau$ are nonzero in $\mathfrak{N}_*$. We shall see that Stong manifolds play an important role in our argument.

\vskip .2cm
This note is organized as follows. We shall review the notions and basic properties of quasitoric manifolds and small covers, and state the related result of Buchstaber--Panov--Ray on specially omnioriented quasitoric manifolds in Section~\ref{qs}. We shall review the Stong's work on Stong manifolds and construct some nonbounding orientable Stong manifolds in Section~\ref{stong}. In addition, we also calculate the characteristic matrices of Stong manifolds there. In Section~\ref{m-r} we shall construct required examples of omnioriented quasitoric manifolds as special unitary manifolds and complete the proof of our main result.

\section{Quasitoric manifolds and small covers}\label{qs}
Davis and Januszkiewicz in \cite{dj} introduced and studied two kinds of equivariant manifolds--quasitoric manifolds and small covers, whose geometric and algebraic topology has a strong link to the combinatorics of
polytopes. Following~\cite{dj},  let  $$ G_d^n=\begin{cases}
({\Bbb Z}_2)^n & \text{ if } d=1\\
T^n & \text{ if } d=2
\end{cases}
\ \ \text{and }\ \
R_d
=\begin{cases}
{\Bbb Z}_2 & \text{ if } d=1\\
{\Bbb Z} & \text{ if } d=2.
\end{cases}$$
A $G_d^n$-manifold $\pi_d: M^{dn}\longrightarrow P^n$
$(d=1, 2)$ is a smooth closed $(dn)$-dimensional $G_d^n$-manifold
admitting a locally standard $G_d^n$-action such that its orbit
space is a simple convex $n$-polytope $P^n$. Such a $G_d^n$-manifold
is called a {\em small cover} if $d=1$ and a {\em quasitoric
manifold } if $d=2$.

\vskip .2cm
For a simple convex polytope $P^n$, let $\mathcal{F}(P^n)$ denote
the set of all facets (i.e., $(n-1)$-dimensional faces) of $P^n$. We
know from \cite{dj}
 that each $G_d^n$-manifold $\pi_d: M^{dn} \longrightarrow P^n$ determines a {\em characteristic
function} $\lambda_d$  on $P^n$ $$\lambda_d:
\mathcal{F}(P^n)\longrightarrow R_d^n$$ defined by mapping each
facet in $\mathcal{F}(P^n)$ to nonzero elements of $R_d^n$ such that
$n$ facets meeting at each vertex are mapped to  a basis of $R_d^n$.
 Conversely, the pair $(P^n, \lambda_d)$ can be used to reconstruct $M^{dn}$ as follows:
first $\lambda_d$ gives the following  equivalence relation
$\sim_{\lambda_d}$ on $P^n\times G_d^n$
\begin{equation}\label{equiv}
(x, g)\sim_{\lambda_d} (y, h)\Longleftrightarrow \begin{cases} x=y, g=h & \text{ if } x\in \text{\rm int}(P^n)\\
x=y, g^{-1}h\in G_F &\text{ if } x\in \text{\rm int}F\subset
\partial P^n
\end{cases}\end{equation}
then the quotient space $P^n\times G_d^n/\sim_{\lambda_d}$,  denoted
by $M(P^n, \lambda_d)$,  is  the reconstruction of
$M^{dn}$, where $G_F$ is explained as follows:  for each point $x\in
\partial P^n$, there exists a unique face $F$ of $P^n$ such that $x$
is in its relative interior. If $\dim F=k$, then there are $n-k$
facets, say $F_{i_1}, ..., F_{i_{n-k}}$, such that
$F=F_{i_1}\cap\cdots \cap F_{i_{n-k}}$, and furthermore,
$\lambda_d(F_{i_1}), ..., \lambda_d(F_{i_{n-k}})$ determine a
subgroup of rank $n-k$ in $G_d^n$, denoted by $G_F$. This
reconstruction of $M^{dn}$ tells us that the topology of $\pi_d:
M^{dn}\longrightarrow P^n$ can be determined by $(P^n, \lambda_d)$.

\begin{rem}
If we fix an ordering for all facets in $\mathcal{F}(P)$ (e.g., say $F_1, ..., F_m$) , then the characteristic
function $\lambda_d:
\mathcal{F}(P^n)\longrightarrow R_d^n$ uniquely determines a matrix of size $n\times m$ over $R_d$
$$\Lambda_d=(\lambda_d(F_1), \cdots, \lambda_d(F_m))$$
with $\lambda_d(F_i)$ as columns, which is called the {\em characteristic matrix} of $(P^n, \lambda_d)$ or $M(P^n, \lambda_d)$.
\end{rem}

 We may see from this reconstruction of $G_d^n$-manifolds
that  there is also an essential relation between small covers and
quasitoric manifolds over a simple polytope. In fact, given a quasitoric manifold $M(P^n,
\lambda_2)$ over $P^n$, as shown in \cite[Corollary 1.9]{dj}, there
is a natural conjugation involution on $P^n\times T^n$ defined by $(p,
g)\longmapsto (p, g^{-1})$, which fixes $P^n\times ({\Bbb Z}_2)^n$.
Then this involution descends an involution $\tau$ on $M(P^n,
\lambda_2)$ whose fixed point set is exactly a small cover $M(P^n,
\lambda_1)$ over $P^n$, where $\lambda_1$ is the mod 2 reduction of
$\lambda_2$.

\vskip .2cm

 An {\em omniorientation} of a quasitoric manifold $\pi: M(P^n, \lambda_2)\longrightarrow P^n$ is, by definition in~\cite{bpr1}, just one choice of orientations of $M(P^n, \lambda_2)$ and submanifolds $\pi^{-1}(F), F\in \mathcal{F}(P^n)$. Thus, a quasitoric manifold $\pi: M(P^n, \lambda_2)\longrightarrow P^n$ has $2^{m+1}$ omniorientations, where $m$ is the number of all facets of $P^n$. Clearly, the conjugation involution $\tau$ on $M(P^n,
\lambda_2)$ is independent of the choices of omniorientations of  $M(P^n,
\lambda_2)$. Now let $\mathcal{O}(M(P^n, \lambda_2))$ denote the set of  all $2^{m+1}$  omniorientations. Buchstaber, Panov and Ray showed in \cite{bpr1} (also see \cite{bpr}) that for each omniorientation $\mathfrak{o}\in \mathcal{O}(M(P^n, \lambda_2))$, $M(P^n,
\lambda_2)$ with this omniorientation $\mathfrak{o}$ always admits a tangential stably complex structure, so it is a unitary manifold. In~\cite{bpr}, Buchstaber, Panov and Ray gave a characterization for $M(P^n,
\lambda_2)$ with  $\mathfrak{o}\in \mathcal{O}(M(P^n, \lambda_2))$ to be a special unitary manifold in terms of $\lambda_2$, which is stated as follows.

\begin{prop}[\cite{bpr}]\label{special}
Let $M(P^n, \lambda_2)$ be a quasitoric manifold. Then $M(P^n,
\lambda_2)$ with an omniorientation $\mathfrak{o}\in \mathcal{O}(M(P^n, \lambda_2))$ is a special unitary manifold if and only if there exists a matrix $\sigma$ in $\text{\rm GL}_n({\Bbb Z})$ such that for each facet $F\in \mathcal{F}(P^n)$, the sum of all entries of $\sigma\circ\lambda_2(F)$ is exactly $1$.
\end{prop}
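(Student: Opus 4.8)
The plan is to recall the standard description of the stably complex structure that an omniorientation determines on a quasitoric manifold, and then read off when its first Chern class vanishes. By the Davis--Januszkiewicz/Buchstaber--Ray construction, an omniorientation $\mathfrak{o}$ of $M(P^n,\lambda_2)$ picks out, for each facet $F\in\mathcal{F}(P^n)$, a complex line bundle $\rho_F\to M(P^n,\lambda_2)$ (the one whose restriction over $\pi^{-1}(F)$ is the normal bundle of that characteristic submanifold, oriented by $\mathfrak{o}$), together with a compatible orientation of $M$. The key identity, proved in \cite{br}, is the stable tangential isomorphism
\begin{equation}\label{stabtan}
TM(P^n,\lambda_2)\oplus \underline{\mathbb{R}}^{2(m-n)}\;\cong\;\bigoplus_{F\in\mathcal{F}(P^n)}\rho_F,
\end{equation}
where $m$ is the number of facets; this is precisely the stably complex structure $\mathfrak{o}$ induces. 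Hence in $H^2(M(P^n,\lambda_2);\mathbb{Z})$ we have $c_1\bigl(M(P^n,\lambda_2),\mathfrak{o}\bigr)=\sum_{F\in\mathcal{F}(P^n)} c_1(\rho_F)=\sum_{F} v_F$, where $v_F:=c_1(\rho_F)$ is the class dual to the characteristic submanifold $\pi^{-1}(F)$. So the problem reduces to deciding when $\sum_F v_F=0$ in $H^2(M;\mathbb{Z})$.

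Next I would invoke the description of $H^*(M(P^n,\lambda_2);\mathbb{Z})$ as the quotient of the face ring (Stanley--Reisner ring) $\mathbb{Z}[v_{F_1},\dots,v_{F_m}]$ by the sum of two ideals: the Stanley--Reisner ideal generated by the monomials $v_{F_{i_1}}\cdots v_{F_{i_k}}$ for non-faces, and the linear ideal $J_\lambda$ generated by the $n$ linear forms $\theta_j=\sum_{i=1}^m \lambda_{2,ji}\,v_{F_i}$, $j=1,\dots,n$, where $\lambda_{2,ji}$ is the $j$-th coordinate of $\lambda_2(F_i)$ (this is \cite[Theorem 4.14]{dj} in the graded-ring form). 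In degree $2$ the Stanley--Reisner ideal contributes nothing (single generators $v_{F_i}$ are never in the ideal, since each facet is a face), so $H^2(M;\mathbb{Z})$ is exactly $\mathbb{Z}^m$ modulo the span of the rows of the characteristic matrix $\Lambda_2$. Therefore $c_1=\sum_F v_F=0$ if and only if the all-ones vector $(1,1,\dots,1)\in\mathbb{Z}^m$ lies in the row span of $\Lambda_2$, i.e. if and only if there exist integers $a_1,\dots,a_n$ with $\sum_j a_j\,\lambda_2(F_i)=\!$ well, with $\sum_j a_j\lambda_{2,ji}=1$ for every $i$.

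Finally I would turn this into the stated criterion. The nonstandard-looking condition ``the sum of all entries of $\lambda_2(F)$ is $1$ for each $F$'' is the special case $a_1=\dots=a_n=1$. To see it is equivalent in general, one uses that the row span is $\GL(n,\mathbb{Z})$-invariant: changing the basis of $T^n$ acts on $\Lambda_2$ by left multiplication by a matrix in $\GL(n,\mathbb{Z})$, hence preserves both the row span and the class of the quasitoric manifold up to equivariant diffeomorphism, and also preserves omniorientations. So if $(1,\dots,1)$ is in the row span via coefficients $a=(a_j)$, then after a suitable change of basis (completing a primitive vector to a basis — here one should note $a$ can be taken primitive, adjusting by the relations if needed, or argue directly) the new characteristic matrix has column sums all equal to $1$. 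Conversely the column-sum-$1$ condition is manifestly the statement that $(1,\dots,1)$ is the sum of the rows. This equivalence, together with the $c_1$ computation above, gives the Proposition. The one genuinely delicate point is the primitivity/basis-completion step in the last paragraph: one must make sure that an arbitrary integral solution $a$ of $\sum_j a_j\lambda_{2,ji}=1$ can be replaced by one that extends to a $\mathbb{Z}$-basis of $\mathbb{Z}^n$, so that the normalizing change of coordinates actually lies in $\GL(n,\mathbb{Z})$; this is where I expect the main (though still routine) work to lie, and it is handled by a standard Smith-normal-form argument on $\Lambda_2$.
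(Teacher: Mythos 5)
The paper itself contains no proof of this proposition: it is quoted from \cite{bpr} and, in the body of the paper, only the ``if'' direction is ever used (to certify that the matrices $\Lambda_2^{(2,0,\ldots,0)}$, $\Lambda_2^{(4,2,0,\ldots,0)}$, $\lambda_2^{\langle 7\rangle}$, etc., give special unitary quasitoric manifolds). So there is nothing in the paper to compare your argument against line by line; judged on its own, your route is the standard one and is essentially correct: the Buchstaber--Ray stable splitting gives $c_1(M,\mathfrak{o})=\sum_F v_F$, the Davis--Januszkiewicz presentation \cite[Theorem 4.14]{dj} identifies $H^2(M;\mathbb{Z})$ with $\mathbb{Z}^m$ modulo the row span of $\Lambda_2$ (the Stanley--Reisner ideal contributes nothing in this degree), and hence $c_1=0$ if and only if $(1,\ldots,1)$ lies in the integral row lattice of $\Lambda_2$; column sums all equal to $1$ is precisely the case of coefficient vector $(1,\ldots,1)$.

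Two comments on the last step. First, the point you flag as delicate is in fact immediate and needs no Smith normal form: if $a\Lambda_2=(1,\ldots,1)$ then any integer dividing all entries of $a$ divides every entry of $(1,\ldots,1)$, so $a$ is primitive, and one can choose $A\in\GL(n,\mathbb{Z})$ whose rows sum to $a$, whence $A\Lambda_2$ has all column sums equal to $1$. Second, be aware that this proves the invariant statement ``$(M,\mathfrak{o})$ is special unitary if and only if the column sums can all be made $1$ after an automorphism of the torus (equivalently, $(1,\ldots,1)$ lies in the row lattice)''; the literal ``only if'' for a fixed $\lambda_2$ is false (negating the basis of $T^n$ changes all column sums while leaving the stably complex manifold unchanged), so the normalization by $\GL(n,\mathbb{Z})$ that you build in is a necessary part of the statement, not an optional convenience. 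Since the paper only needs ``column sums $1$ $\Rightarrow$ $c_1=0$'', this caveat does not affect anything downstream.
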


%\subsection{Generalized (real) Bott manifolds}

\section{Stong manifolds}\label{stong}

\subsection{Stong manifolds} In~\cite{s}, Stong introduced the Stong manifolds, from which all generators of the unoriented bordism ring $\mathfrak{N}_*$ can be chosen. A Stong manifold is defined as the real projective space bundle
denoted by ${\Bbb R}P(n_1, ..., n_k)$ of the bundle $\gamma_1\oplus\cdots\oplus\gamma_k$ over
${\Bbb R}P^{n_1}\times\cdots\times{\Bbb R}P^{n_k}$, where $\gamma_i$ is the pullback of the canonical bundle over the $i$-th factor ${\Bbb R}P^{n_i}$. The Stong manifold ${\Bbb R}P(n_1, ..., n_k)$ has dimension $n_1+\cdots+n_k+k-1$.
\vskip .2cm

As shown in~\cite{s}, the cohomology with ${\Bbb Z}_2$ coefficients of ${\Bbb R}P(n_1,  ..., n_k)$ is the free module over the cohomology of ${\Bbb R}P^{n_1}\times\cdots\times{\Bbb R}P^{n_k}$ on $1, e, ..., e^{k-1}$, where $e$ is the first Stiefel-Whitney class of the canonical line bundle over ${\Bbb R}P(n_1,  ..., n_k)$, with the relation
$$e^k=w_1e^{k-1}+\cdots+w_re^{k-r}+\cdots+w_k$$
where $w_i$ is the $i$-th Stiefel-Whitney class of $\gamma_1\oplus\cdots\oplus\gamma_k$. % and $\ell=n_1+\cdots+n_k$.
Then the total Stiefel-Whitney class of ${\Bbb R}P(n_1,  ..., n_k)$ is
\begin{equation} \label{sw class}
\prod_{i=1}^k(1+a_i)^{n_i+1}(1+a_i+e)
\end{equation}
where $a_i$ is the pullback of the nonzero class in $H^1({\Bbb R}P^{n_i};{\Bbb Z}_2)$.
\begin{rem} \label{coh}
In fact, it is easy to see that the total Stiefel-Whitney class of $\gamma_1\oplus\cdots\oplus\gamma_k$ is exactly
$$w(\gamma_1\oplus\cdots\oplus\gamma_k)=\prod_{i=1}^k(1+a_i).$$
So the cohomology with ${\Bbb Z}_2$ coefficients of ${\Bbb R}P(n_1,  ..., n_k)$ may be written as
$${\Bbb Z}_2[a_1, ..., a_k,e]/A$$
where $A$ is the ideal generated by $a_1^{n_1+1}, ..., a_k^{n_k+1}$, and $\prod_{i=1}^k(a_i+e)$.
\end{rem}

Stong further showed in~\cite{s} that

\begin{prop}[\cite{s}]\label{ind}
For $k>1$, ${\Bbb R}P(n_1,  ..., n_k)$ is indecomposable in $\mathfrak{N}_*$ if and only if
$${{m+k-2}\choose {n_1}}+\cdots+{{m+k-2}\choose {n_k}}\equiv 1\mod 2$$
where $m=n_1+\cdots+n_k$.
\end{prop}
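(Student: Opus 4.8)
The plan is to compute the relevant Stiefel--Whitney numbers of $X = {\Bbb R}P(n_1, \dots, n_k)$ directly from the cohomology ring and the total Stiefel--Whitney class given in~\eqref{sw class}, and then apply the classical decomposability criterion of Milnor and Thom: a manifold $V^\ell$ (here $\ell = n_1 + \cdots + n_k + k - 1$, so I rename for clarity the dimension $d = \dim X$) is indecomposable in $\mathfrak{N}_*$ if and only if its characteristic number $s_d[V]$ associated with the Newton polynomial $s_d$ in the Pontryagin-type (here Stiefel--Whitney) roots is nonzero, i.e.\ $\langle s_d(X), [X]\rangle \neq 0$, where $s_d$ is the $d$-th power sum evaluated on the formal roots of the total SW class. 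So the first step is to read off from~\eqref{sw class} that the formal roots of $w(X)$ are: $a_i$ with multiplicity $n_i + 1$ for each $i$, together with $a_i + e$ for each $i$ (one each). Hence $s_d(X) = \sum_{i=1}^k (n_i + 1) a_i^{\,d} + \sum_{i=1}^k (a_i + e)^{\,d}$.

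Next I would evaluate this class on the fundamental class, i.e.\ extract the coefficient of the top monomial in ${\Bbb Z}_2[a_1, \dots, a_k, e]/A$, where $A = (a_1^{n_1+1}, \dots, a_k^{n_k+1}, \prod_i(a_i+e))$. The top-dimensional part of the ring is one-dimensional over ${\Bbb Z}_2$, spanned by $a_1^{n_1} \cdots a_k^{n_k} e^{k-1}$. The terms $(n_i+1)a_i^{d}$ vanish on $[X]$ for $d = \dim X > n_i$ (as $a_i^{n_i+1} = 0$), so only $\sum_i (a_i + e)^d$ contributes. Expanding $(a_i + e)^d = \sum_{j} \binom{d}{j} a_i^{\,j} e^{\,d-j}$ (mod $2$), and using $a_i^{n_i+1}=0$, the only surviving term from the $i$-th summand has $a_i$-degree exactly $n_i$, contributing $\binom{d}{n_i} a_i^{n_i} e^{d - n_i}$. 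Then I must reduce $a_i^{n_i} e^{d-n_i}$ modulo the relation $\prod_j (a_j + e) = 0$; concretely, $e^{d-n_i+1}$ wants to be re-expressed in terms of lower powers of $e$ with coefficients in the $a_j$, until one lands in the span of $a_1^{n_1}\cdots a_k^{n_k} e^{k-1}$. I expect this reduction to show that the coefficient of the top monomial in $a_i^{n_i} e^{d - n_i}$ equals $1$ for each $i$, so that $\langle s_d(X), [X]\rangle = \sum_{i=1}^k \binom{d}{n_i} \bmod 2$, and then substituting $d = \ell + k - 1$ where $\ell = \sum n_i$; a small re-indexing (the binomial identity $\binom{\ell+k-1}{n_i} \equiv \binom{\ell+k-2}{n_i}$ mod $2$ coming from Lucas's theorem when the relevant bit manipulation goes through, or more carefully a direct check) would yield exactly the stated condition $\sum_i \binom{\ell+k-2}{n_i} \equiv 1 \pmod 2$.

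The main obstacle I anticipate is the cohomological bookkeeping in the quotient ring: verifying that $a_i^{n_i} e^{d - n_i}$ reduces to precisely $a_1^{n_1}\cdots a_k^{n_k} e^{k-1}$ (with coefficient $1$) and not to $0$ or some other multiple. This requires a careful inductive use of the relation $e^k = w_1 e^{k-1} + \cdots + w_k$ (equivalently $\prod_i(a_i+e)=0$) to push the power of $e$ down from $d - n_i$ to $k-1$ while tracking the $a_j$-exponents; one must confirm that every such reduction step, when finally projected onto the one-dimensional top piece, contributes exactly once. A clean way to organize this is to introduce the auxiliary polynomial ring and argue that evaluation against $[X]$ is the ${\Bbb Z}_2$-linear functional picking out the coefficient of $a_1^{n_1}\cdots a_k^{n_k} e^{k-1}$, and that this functional applied to $(a_i+e)^d$ equals $\binom{d}{n_i}$ by a direct expansion together with the observation that $a_i^{n_i} e^{k-1}\cdot\prod_{j\ne i}a_j^{n_j}$ has coefficient $1$ — which follows because $\prod_{j\ne i}(a_j+e)\cdot(\text{something})$ never interferes in that degree. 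The rest (the binomial re-indexing) is then a routine application of Lucas's theorem, and I would state it as such rather than belabor it. Finally, invoking the Milnor--Thom criterion ``$\langle s_d, [X]\rangle \neq 0 \iff X$ indecomposable'' completes the proof, with the hypothesis $k > 1$ ensuring $X$ is not itself a projective space (the $k=1$ case being ${\Bbb R}P^{n_1}$-bundle trivialities already understood).
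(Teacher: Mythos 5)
Your overall strategy (compute the top Newton/s-number from the formal roots of the total Stiefel--Whitney class \eqref{sw class} and invoke the Milnor--Thom indecomposability criterion) is sound, and it is essentially how this result of Stong is obtained; note the paper itself does not reprove Proposition~\ref{ind} but simply cites \cite{s}. However, your execution has a genuine error at exactly the step you flagged as the ``main obstacle''. It is not true that in $(a_i+e)^d=\sum_j\binom{d}{j}a_i^je^{d-j}$ only the term with $a_i$-degree $j=n_i$ survives evaluation on $[X]$. For $j<n_i$ the class $a_i^je^{d-j}$ does \emph{not} die: writing $d-j=(k-1)+(\ell-j)$ and using the projective-bundle Gysin map, $\pi_*\bigl(e^{(k-1)+r}\bigr)=\bar w_r(\gamma_1\oplus\cdots\oplus\gamma_k)$, the dual (Segre) class, and since mod $2$ one has $\bar w=\prod_m(1+a_m+a_m^2+\cdots)$, the coefficient of $a_1^{n_1}\cdots a_k^{n_k}$ in $a_i^j\,\bar w_{\ell-j}$ equals $1$ for every $0\le j\le n_i$. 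Hence the correct evaluation is
$$\bigl\langle (a_i+e)^d,[X]\bigr\rangle=\sum_{j=0}^{n_i}\binom{d}{j}\equiv\binom{d-1}{n_i}\pmod 2,$$
the last congruence coming from $(1+x)^d(1+x+x^2+\cdots)=(1+x)^{d-1}$ over ${\Bbb Z}_2$. Summing over $i$ with $d-1=\ell+k-2$ gives precisely the stated criterion. Your version instead yields $\sum_i\binom{d}{n_i}$ with $d=\ell+k-1$, and the ``re-indexing'' $\binom{\ell+k-1}{n_i}\equiv\binom{\ell+k-2}{n_i}\pmod 2$ that you hoped would repair this is simply false (already $\binom{2}{1}\not\equiv\binom{1}{1}$); Lucas's theorem gives no such identity.

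The two formulas genuinely disagree, so the gap is not cosmetic: for ${\Bbb R}P(1,0)$ (so $k=2$, $\ell=1$, $d=2$) your count gives $\binom{2}{1}+\binom{2}{0}\equiv 1$, predicting indecomposability, whereas this manifold is a null-bordant surface (all its Stiefel--Whitney numbers vanish, and indeed $s_2=w_1^2+\cdots$ evaluates to $0$ since $a_1^2=0$), consistent with the correct value $\binom{1}{1}+\binom{1}{0}\equiv 0$. The fix is exactly the computation above: keep all terms $0\le j\le n_i$, justify $\langle a_i^je^{d-j},[X]\rangle=1$ via the Gysin/Segre-class identity (or by inductively pushing $e^k=w_1e^{k-1}+\cdots+w_k$ down, which amounts to the same thing), and then collapse the partial binomial sum mod $2$ as indicated. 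With that correction your argument does prove the proposition.
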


Note that generally,  an indecomposable element in $\mathfrak{N}_*$ means that it is not a sum of products of elements of positive degree (see~\cite{s}).

It is not difficult to see from the  expression (\ref{sw class}) of the total Stiefel-Whitney class of ${\Bbb R}P(n_1,  ..., n_k)$  that

\begin{cor}\label{or}
For $k>1$, ${\Bbb R}P(n_1,  ..., n_k)$ is orientable if and only if $k$ and all $n_i$ are even.
\end{cor}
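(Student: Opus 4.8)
The plan is to read off orientability from the first Stiefel--Whitney class: ${\Bbb R}P(n_1,\dots,n_k)$ is a closed connected manifold, so it is orientable if and only if $w_1\bigl({\Bbb R}P(n_1,\dots,n_k)\bigr)=0$. Thus the whole proof reduces to extracting the degree-one component of the total Stiefel--Whitney class (\ref{sw class}) and deciding when it vanishes in $H^1$.

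First I would compute $w_1$ from (\ref{sw class}). Since the degree-one part of a product of classes each of the form $1+(\text{higher order})$ is the sum of the degree-one parts of the factors, and since $(1+a_i)^{n_i+1}$ contributes $(n_i+1)a_i$ in degree one while $(1+a_i+e)$ contributes $a_i+e$, we obtain
\[
w_1\bigl({\Bbb R}P(n_1,\dots,n_k)\bigr)=\sum_{i=1}^k\bigl((n_i+1)a_i+a_i+e\bigr)=\sum_{i=1}^k n_i\,a_i+k\,e
\]
with all coefficients taken modulo $2$ (using $n_i+2\equiv n_i$). Next I would use the presentation $H^*\bigl({\Bbb R}P(n_1,\dots,n_k);{\Bbb Z}_2\bigr)={\Bbb Z}_2[a_1,\dots,a_k,e]/A$ of Remark \ref{coh}: the ideal $A$ is generated by $a_1^{n_1+1},\dots,a_k^{n_k+1}$ and $\prod_{i=1}^k(a_i+e)$, all of cohomological degree at least two except that $a_i$ itself lies in $A$ when $n_i=0$. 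Hence in degree one the only relations are $a_i=0$ for those $i$ with $n_i=0$, so $e$ together with $\{a_i:n_i\ge 1\}$ is a ${\Bbb Z}_2$-basis of $H^1$. Therefore $w_1=0$ if and only if $k$ is even and $n_i$ is even for each $i$ with $n_i\ge 1$; since $n_i=0$ is even, this is exactly the assertion that $k$ and all the $n_i$ are even.

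The computation is short, so the main point requiring care is the claim that no relation from $A$ interferes in cohomological degree one, which is what makes $\prod_{i=1}^k(a_i+e)$ irrelevant there: its degree $k$ must be at least two, and indeed for $k=1$ the construction degenerates (${\Bbb R}P(n_1)={\Bbb R}P^{n_1}$) and that relation does live in degree one, so one works in the range $k\ge 2$. Beyond that, the only other place a slip could creep in is the bookkeeping for indices $i$ with $n_i=0$, where $a_i$ vanishes in cohomology but the associated parity condition holds automatically.
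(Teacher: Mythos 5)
Your proof is correct and follows exactly the route the paper indicates: extracting $w_1=\sum_i n_i a_i + k\,e$ from the total Stiefel--Whitney class (\ref{sw class}) and checking its vanishing in $H^1$ using the presentation of Remark~\ref{coh}. Your care with the indices having $n_i=0$ and with the degenerate case $k=1$ (implicitly excluded, as in Proposition~\ref{ind}) is exactly the bookkeeping the paper leaves to the reader.
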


By Proposition~\ref{ind} and Corollary~\ref{or}, we may choose the following examples of indecomposable, orientable Stong manifolds. For $l\geq 0$, ${\Bbb R}P(2, \underbrace{0, ..., 0}_{4l+3})$ and ${\Bbb R}P(4, 2, \underbrace{0,..., 0}_{8l+4})$ are indecomposable and orientable, so they represent nonzero elements in $\mathfrak{N}_*$.
%\begin{exam}\end{exam}
Let $\alpha_{4l+5}$ and $\alpha_{8l+11}$ denote the unoriented bordism classes of ${\Bbb R}P(2, \underbrace{0, ..., 0}_{4l+3})$ and ${\Bbb R}P(4, 2, \underbrace{0,..., 0}_{8l+4})$, respectively. Then we have that

\begin{lem}\label{subring}
All $\alpha_{4l+5}$ and $\alpha_{8l+11}$ with $l\geq 0$ form a polynomial subring $${\Bbb Z}_2[\alpha_{4l+5}, \alpha_{8l+11}|l\geq 0]$$ of $\mathfrak{N}_*$, which contains nonzero classes of dimension $\not=1, 2, 3, 4, 6, 7, 8, 12$.
\end{lem}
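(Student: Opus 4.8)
The plan is to establish two facts: first, that the classes $\alpha_{4l+5}$ and $\alpha_{8l+11}$ are algebraically independent generators, so that they span a polynomial subring; and second, to identify precisely which dimensions occur in this subring. For the first part, recall that $\mathfrak{N}_* \cong \mathbb{Z}_2[x_i : i \neq 2^s - 1]$ is a polynomial ring with one generator in each degree $i$ that is not of the form $2^s-1$, and that a class $\beta \in \mathfrak{N}_i$ may serve as a polynomial generator in degree $i$ precisely when it is indecomposable, i.e.\ not a sum of products of lower-dimensional classes. By Proposition~\ref{ind} and Corollary~\ref{or}, each $\alpha_{4l+5}$ (realized by ${\Bbb R}P(2,\underbrace{0,\dots,0}_{4l+3})$, of dimension $2 + (4l+3)\cdot 0 + (4l+4) - 1 = 4l+5$) and each $\alpha_{8l+11}$ (realized by ${\Bbb R}P(4,2,\underbrace{0,\dots,0}_{8l+4})$, of dimension $4 + 2 + (8l+4)\cdot 0 + (8l+6) - 1 = 8l+11$) is indecomposable in $\mathfrak{N}_*$. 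I would verify the congruence of Proposition~\ref{ind} in each family: for ${\Bbb R}P(2,0,\dots,0)$ with $k = 4l+4$, $\ell = 2$, one needs $\binom{4l+4}{2} + (4l+3)\binom{4l+4}{0} \equiv 1 \pmod 2$, i.e.\ $\binom{4l+4}{2} + (4l+3) \equiv 1$; since $\binom{4l+4}{2} = (2l+2)(4l+3)$ is even and $4l+3$ is odd, the sum is odd, as required. The analogous check for the second family (with $k = 8l+6$, $\ell = 6$) is a short Lucas-type computation.

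Since distinct generators sit in distinct degrees — the degrees $4l+5$ ($l \geq 0$) are $5, 9, 13, 17, \dots$ and the degrees $8l+11$ ($l\geq 0$) are $11, 19, 27, 35, \dots$, and these two arithmetic progressions are disjoint (one is $\equiv 1 \bmod 4$, the other $\equiv 3 \bmod 8$) — each indecomposable class can be taken as one of the polynomial generators $x_i$ of $\mathfrak{N}_*$ in its degree. Hence the subalgebra they generate is a genuine polynomial subring ${\Bbb Z}_2[\alpha_{4l+5}, \alpha_{8l+11} \mid l \geq 0]$, with no relations.

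It remains to determine which total degrees $N$ are \emph{not} represented by a nonzero class, i.e.\ for which $N$ there is no way to write $N$ as a sum of numbers drawn (with repetition) from $S := \{4l+5 : l \geq 0\} \cup \{8l+11 : l\geq 0\} = \{5,9,11,13,17,19,21,25,\dots\}$, together with the empty sum giving $N=0$. This is a routine numerical-semigroup computation: the smallest generators are $5, 9, 11, 13$, and one checks directly that every $N \geq 9$ except $N \in \{12\}$ is expressible (e.g.\ $10 = 5+5$, $14 = 5+9$, $15 = 5+5+5$, $16 = 5+11$, $18 = 9+9$, $20 = 5+5+5+5$, $22 = 11+11$, $23 = 5+5+13$, $24 = 5+9+5+5$, etc.), while $N = 1,2,3,4,6,7,8,12$ admit no representation as such a sum. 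In particular the subring contains nonzero classes in every dimension except $1,2,3,4,6,7,8,12$, which is the assertion of the lemma.

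The main obstacle is the numerical bookkeeping in the last step: one must argue that the exceptional set is exactly $\{1,2,3,4,6,7,8,12\}$ and not larger, which amounts to showing every integer $N \geq 9$ with $N \neq 12$ lies in the numerical semigroup generated by $S$. This is most cleanly handled by noting that $\{5,9,11,13\}$ already generates all sufficiently large integers (since $\gcd = 1$, and in fact $5$ together with $9, 11, 13$ covers all residues mod $5$ once $N$ is moderately large: $9 \equiv 4$, $11 \equiv 1$, $13 \equiv 3$, $22 = 11+11 \equiv 2$, $20 \equiv 0$), and then finishing the finitely many small cases $9 \leq N \leq 21$ by hand, together with verifying directly that $12$ is not a sum of elements of $\{5,9,11,13,\dots\}$. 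Everything else is immediate from the results already established in Section~\ref{stong}.
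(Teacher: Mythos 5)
Your proof is correct and follows exactly the route the paper intends: the paper states this lemma without a written proof, taking it as a consequence of Proposition~\ref{ind} and Corollary~\ref{or} (indecomposable classes in pairwise distinct degrees $4l+5$ and $8l+11$ can be taken as polynomial generators of $\mathfrak{N}_*$), plus the elementary numerical-semigroup check of which dimensions occur, and your write-up supplies precisely these details. The one step you defer, the parity check for ${\Bbb R}P(4,2,0,\dots,0)$, does go through: with $\ell+k-2=8l+10$ one has $\binom{8l+10}{4}+\binom{8l+10}{2}+(8l+4)\equiv 0+1+0\equiv 1 \pmod 2$.
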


\begin{proof}
Because $\alpha_{4l+5}$ and $\alpha_{8l+11}$ are indecomposable in $\mathfrak{N}_*$, any non-trivial polynomial in $\alpha_{4l+5}$ and $\alpha_{8l+11}$ is nonzero in $\mathfrak{N}_*$.
\end{proof}

\subsection{Characteristic matrices of Stong manifolds}
We see that ${\Bbb R}P(n_1, ..., n_k)$ is a ${\Bbb R}P^{k-1}$-bundle over ${\Bbb R}P^{n_1}\times\cdots\times{\Bbb R}P^{n_k}$, so
it is a special generalized real Bott manifold, and in particular, it is also a small cover over $\Delta^{n_1}\times\cdots\times\Delta^{n_k}\times \Delta^{k-1}$,  where $\Delta^l$ denotes an $l$-dimensional simplex.

\begin{rem}
A generalized real Bott manifold is the total space $B^{\Bbb R}_{k+1}$ of an iterated fiber bundle:
\[
\begin{CD} B^{\Bbb R}_{k+1}@ >{\pi_{k+1}}>> B^{\Bbb R}_{k}@>{\pi_{k}}>> \cdots@>{\pi_2}>>B^{\Bbb R}_{1}@
>{\pi_1}>>B^{\Bbb R}_{0}=\{\text{a point}\}
\end{CD}
\]
where each $\pi_i: B^{\Bbb R}_i\longrightarrow B^{\Bbb R}_{i-1}$  is the projectivization of a Whitney sum of $n_i+1$ real line bundles over $B^{\Bbb R}_i$.    It is well-known that  the  generalized real Bott
manifold $B^{\Bbb R}_{k+1}$ is a small cover over $\Delta^{n_1}\times\cdots\times\Delta^{n_{k+1}}$. Conversely, we also know from \cite{cms} that a small cover over a product of simplices is  a generalized real Bott manifold.
\end{rem}

Now let us look at the characteristic matrix of ${\Bbb R}P(n_1, ..., n_k)$ as a small cover over the product $P=\Delta^{n_1}\times\cdots\times\Delta^{n_k}\times \Delta^{k-1}$ with $k>1$ and $n_1\geq n_2\geq\cdots \geq n_k>0$.
Clearly $P$ has $n_1+\cdots +n_k+2k$ facets, which are  listed  as follows:
 $$F_{n_i, j}=\Delta^{n_1}\times\cdots\times\Delta^{n_{i-1}}\times \Delta^{(n_i)}_j\times \Delta^{n_{i+1}}\times\cdots\times\Delta^{n_k}\times \Delta^{k-1}, 1\leq j\leq n_i+1, 1\leq i\leq k $$
 and
 $$F_{k-1, j}=\Delta^{n_1}\times\cdots\times\Delta^{n_k}\times \Delta^{(k-1)}_j, 1\leq j\leq k$$
where $\Delta^{(l)}_j, j=1, ..., l+1$, denote $l+1$ facets of $\Delta^l$.

\vskip .2cm
Throughout the following, we shall carry out our work on  a fixed ordering of all facets of $P=\Delta^{n_1}\times\cdots\times\Delta^{n_k}\times \Delta^{k-1}$ as follows:
$$F_{n_1, 1}, ..., F_{n_1, n_1+1}, ..., F_{n_k, 1}, ..., F_{n_k, n_k+1}, F_{k-1, 1}, ..., F_{k-1, k}.$$

\begin{prop} \label{matrix}
Up to  automorphisms of $({\Bbb Z}_2)^{n_1+\cdots +n_k+k-1}$, the characteristic matrix $\Lambda_1^{(n_1, ..., n_k)}$ of ${\Bbb R}P(n_1, ..., n_k)$ may be written as
\begin{eqnarray*}
\left(
\begin{array}{ccccccccc}
I_{n_1}  &\textbf{1}_{n_1} & &  & &&&\\
&        &   \ddots & &&&& &\\
& &    & I_{n_{k-1}} &\textbf{1}_{n_{k-1}}&&&&\\
& &  & & &I_{n_{k}}&\textbf{1}_{n_{k}}&&\\
%\hline
&J_{1}  &    \cdots & &J_{k-1}& &\textbf{1}_{k-1} &I_{k-1} & \textbf{1}_{k-1}  \\
\end{array}
\right)
\end{eqnarray*}
with only blocks   $I_{i}$, $\textbf{1}_i$ $(i=n_1, ..., n_k, k-1)$ and $J_j (j=1, ..., k-1)$ being nonzero,  and $0$ otherwise,
where $I_{i}$ denotes the identity matrix of size $i\times i$, $J_j$ denotes the matrix of size $(k-1)\times 1$ with only $(j,1)$-entry being $1$ and $0$ otherwise, and $\textbf{1}_i$ denotes the matrix of size $i\times 1$ with all entries being $1$.
\end{prop}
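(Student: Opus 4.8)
The plan is to build $\Lambda_1^{(n_1,\ldots,n_k)}$ in two stages reflecting the realization of ${\Bbb R}P(n_1,\ldots,n_k)$ as a projectivization, and then to cross-check against the cohomology ring of Remark~\ref{coh}. The tool throughout is the Davis--Januszkiewicz presentation of the ${\Bbb Z}_2$-cohomology of a small cover over a simple polytope $Q$ (no extra hypothesis is needed over ${\Bbb Z}_2$): writing $v_F$ for the class dual to $\pi^{-1}(F)$, the ring $H^*(\,\cdot\,;{\Bbb Z}_2)$ is ${\Bbb Z}_2[v_F:F\in\mathcal F(Q)]$ modulo the Stanley--Reisner ideal of $Q$ together with the linear forms $\sum_F\lambda_t(F)v_F$ from the rows of the characteristic matrix. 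Since the Stanley--Reisner ideal has no linear part, the row space of the characteristic matrix is exactly the kernel of the evaluation ${\Bbb Z}_2^{\mathcal F(Q)}\twoheadrightarrow H^1$; so once $H^*$ and the classes $v_F$ are known geometrically, the characteristic matrix is determined up to row operations, i.e.\ up to an automorphism of $({\Bbb Z}_2)^{n_1+\cdots+n_k+k-1}$. For the first stage, each ${\Bbb R}P^{n}$ is a small cover over $\Delta^{n}$, and every characteristic function on a simplex is $\GL(n,{\Bbb Z}_2)$-equivalent to the standard one, so its characteristic matrix is $(I_{n}\mid\mathbf 1_{n})$; taking products, $M:={\Bbb R}P^{n_1}\times\cdots\times{\Bbb R}P^{n_k}$ is a small cover over $\Delta^{n_1}\times\cdots\times\Delta^{n_k}$ with the block diagonal characteristic matrix $\Lambda_0=\mathrm{diag}(I_{n_1}\mid\mathbf 1_{n_1},\ldots,I_{n_k}\mid\mathbf 1_{n_k})$, and the $i$-th block of rows forces $v_{F_{n_i,1}}=\cdots=v_{F_{n_i,n_i+1}}=a_i$, the pullback of the generator of $H^1({\Bbb R}P^{n_i};{\Bbb Z}_2)$.

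The second stage is the one real computation: a single projectivization. As recalled just before the statement, ${\Bbb R}P(n_1,\ldots,n_k)$, being the real projective bundle of $\gamma_1\oplus\cdots\oplus\gamma_k$ over $M$, is a small cover over $P=\Delta^{n_1}\times\cdots\times\Delta^{n_k}\times\Delta^{k-1}$. A facet $F_{n_i,j}$ of $P$ has preimage the preimage of $F_{n_i,j}$ in $M$, so $v_{F_{n_i,j}}=a_i$ still; the $i$-th facet $F_{k-1,i}$ of $\Delta^{k-1}$ has preimage the real projective bundle of $\bigoplus_{\ell\ne i}\gamma_\ell$, a codimension-one submanifold whose dual class is $e+w_1(\gamma_i)=e+a_i$, with $e$ as in Remark~\ref{coh}. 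Since the minimal non-faces of a product of simplices are exactly the full facet sets of the individual factors, the Stanley--Reisner ideal of $P$ is generated by $\prod_{j=1}^{n_i+1}v_{F_{n_i,j}}=a_i^{n_i+1}$ (one for each $i$) and $\prod_{i=1}^{k}v_{F_{k-1,i}}=\prod_{i=1}^{k}(e+a_i)$, which already present $H^*$ as ${\Bbb Z}_2[a_1,\ldots,a_k,e]$ modulo the ideal $A$. Hence the linear relations are only there to identify variables: apart from the $n_1+\cdots+n_k$ relations carried by $\Lambda_0$, for each $1\le j\le k-1$ the identity $e+a_j=(e+a_k)+(a_j+a_k)$ forces the linear form $v_{F_{k-1,j}}+v_{F_{k-1,k}}+v_{F_{n_j,n_j+1}}+v_{F_{n_k,n_k+1}}$ into the row space; these $k-1$ forms and the rows of $\Lambda_0$ are linearly independent, so they exhaust the kernel of evaluation into $H^1$ (which has the same dimension $n_1+\cdots+n_k+k-1$) and therefore constitute a characteristic matrix. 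Reading off this $(k-1)$-row block with the prescribed ordering of facets, it is $(I_{k-1}\mid\mathbf 1_{k-1})$ on the last $k$ columns, it carries $J_j$ in the column $F_{n_j,n_j+1}$ for $1\le j\le k-1$, it carries $\mathbf 1_{k-1}$ in the column $F_{n_k,n_k+1}$, and it is $0$ everywhere else; placed beneath $\Lambda_0$, extended by zeros on the $\Delta^{k-1}$-columns (where the old relations need no correction), this is exactly the displayed matrix.

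The step needing the most care is this second stage --- transporting a characteristic matrix through the projectivization of a Whitney sum of line bundles. One has to use the ${\Bbb Z}_2$-coefficient Davis--Januszkiewicz theorem, correctly identify the minimal non-faces of $\Delta^{n_1}\times\cdots\times\Delta^{n_k}\times\Delta^{k-1}$, and keep track of which facet of $\Delta^{k-1}$ receives the all-ones vector and of the non-uniqueness in expressing $w_1(\gamma_j)$ as a ${\Bbb Z}_2$-combination of facet classes; the last point is harmless precisely because the matrix is only asserted up to an automorphism of $({\Bbb Z}_2)^{n_1+\cdots+n_k+k-1}$. A concluding check that the relations listed above collapse $H^*$ to ${\Bbb Z}_2[a_1,\ldots,a_k,e]/A$ finishes the proof.
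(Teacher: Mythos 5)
Your argument is correct, but it reaches the matrix by a genuinely different route than the paper. The paper normalizes the characteristic function to be the identity on the $n_1+\cdots+n_k+k-1$ facets through a fixed vertex, invokes \cite[Lemmas 6.2--6.3]{lt} to reduce the unknowns to the vectors $\beta_i$, and then pins these down by comparing the Davis--Januszkiewicz expression $\prod_F(1+v_F)$ for the total Stiefel--Whitney class with Stong's formula (\ref{sw class}); in particular the identity $v_{F_{k-1,i}}=e+a_i$ is \emph{extracted} from that comparison. You instead identify the Poincar\'e duals of the characteristic submanifolds directly from the bundle picture ($v_{F_{n_i,j}}=\pi^*a_i$, and $v_{F_{k-1,i}}=e+a_i$ because ${\Bbb R}P(\bigoplus_{\ell\ne i}\gamma_\ell)$ is the transverse zero set of a section of $S^*\otimes\pi^*\gamma_i$), and then use the linear-algebra observation that the row space of any characteristic matrix is exactly the degree-one part of the ideal in the ${\Bbb Z}_2$ Davis--Januszkiewicz presentation, i.e.\ the kernel of $e_F\mapsto v_F$ into $H^1$; exhibiting your $n_1+\cdots+n_k+(k-1)$ independent kernel vectors (the dimension count works since $a_1,\ldots,a_k,e$ are independent in $H^1$ when all $n_i>0$) then yields the displayed matrix up to $\GL(n_1+\cdots+n_k+k-1,{\Bbb Z}_2)$. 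What your route buys is that it bypasses \cite{lt} entirely and avoids the Stiefel--Whitney bookkeeping; what the paper's route buys is that it never needs the geometric identification of the dual class of the sub-projectivization, which in your write-up is asserted rather than proved (it is standard, and your cross-check against Remark~\ref{coh} supports it, so I do not regard it as a gap). The residual labeling ambiguity --- which facet of $\Delta^{k-1}$ corresponds to which $\gamma_i$ --- is handled by you at the same ``without loss of generality'' level as in the paper, and is an ambiguity of the identification of the orbit space with the product of simplices rather than of either proof.
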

\begin{proof}
Without the loss of generality, assume that the values of the characteristic function $\lambda_1^{(n_1, ..., n_k)}$ on the following $n_1+\cdots+n_k+k-1$ facets $$F_{n_1, 1}, ..., F_{n_1, n_1}, ..., F_{n_k, 1}, ..., F_{n_k, n_k}, F_{k-1, 1}, ..., F_{k-1, k-1}$$ meeting at a vertex
are all columns with an ordering from the first column to the last column in $I_{n_1+\cdots+n_k+k-1}$, respectively.  It suffices to determine the values of $\lambda_1^{(n_1, ..., n_k)}$
on the  $k+1$ facets
$F_{n_1, n_1+1}, F_{n_2, n_2+1}, ..., F_{n_k, n_k+1},  F_{k-1, k}$.
By \cite[Lemma 6.2]{lt}, we have that for $1\leq i\leq k$
$$\lambda_1^{(n_1, ..., n_k)}(F_{n_i, n_i+1})=\sum_{j=1}^{n_i}\lambda_1^{(n_1, ..., n_k)}(F_{n_i, j})+\beta_i$$
and
$$\lambda_1^{(n_1, ..., n_k)}(F_{k-1, k})=\sum_{j=1}^{k-1}\lambda_1^{(n_1, ..., n_k)}(F_{k-1, j})+\beta_{k+1}$$
such that those entries from $(n_1+\cdots+n_{i-1}+1)$-th to $(n_1+\cdots +n_i)$-th of $\beta_i$ are all zero, and those entries from $(n_1+\cdots+n_k+1)$-th to $(n_1+\cdots +n_k+k-1)$-th of $\beta_{k+1}$ are all zero.
In particular, we also know by \cite[Lemma 6.3]{lt} that there is at least one $\beta_i$ such that $\beta_i=0$ in $({\Bbb Z}_2)^{n_1+\cdots +n_k+k-1}$.

\vskip .2cm Now by \cite[Theorem 4.14]{dj}, we may write $H^*({\Bbb R}P(n_1, ..., n_k);{\Bbb Z}_2)$ as
$${\Bbb Z}_2[F_{n_1, 1}, ..., F_{n_1, n_1+1}, ..., F_{n_k, 1}, ..., F_{n_k, n_k+1}, F_{k-1, 1}, ..., F_{k-1, k}]/I_P+J_{\lambda_1^{(n_1, ..., n_k)}}$$
where the $F_{i, j}$ are used as indeterminants of degree 1, $I_P$ is the Stanley-Reisner ideal generated by $\prod_{j=1}^{n_i+1}F_{n_i, j} (i=1, ..., k)$ and $\prod_{i=1}^kF_{k-1, i}$, and $J_{\lambda_1^{(n_1, ..., n_k)}}$ is the ideal determined by $\lambda_1^{(n_1, ..., n_k)}$. Furthermore, we have by \cite[Corollary 6.8]{dj} that the total Stiefel-Whitney class of ${\Bbb R}P(n_1,  ..., n_k)$ is
$$\prod_{i=1}^k\Big(\prod_{j=1}^{n_i+1}(1+F_{n_i, j})\Big)(1+F_{k-1, i}).$$
Comparing with the formula (\ref{sw class}) or by Remark~\ref{coh}, we see that for each $1\leq i\leq k$,
\begin{equation}\label{equ} F_{n_i, 1}=\cdots= F_{n_i, n_i+1} \text{ (denoted by } a_i) \end{equation}
so $a_i^{n_i+1}=\prod_{j=1}^{n_i+1}F_{n_i, j}=0$. Then we obtain from all equations in (\ref{equ}) that the characteristic matrix $\Lambda_1^{(n_1, ..., n_k)}$ corresponding to $\lambda_1^{(n_1, ..., n_k)}$ is of the form
\begin{eqnarray*}
\left(
\begin{array}{ccccccccc}
I_{n_1}  &\textbf{1}_{n_1} & &  & &&&\\
&        &   \ddots & &&&& &\\
& &    & I_{n_{k-1}} &\textbf{1}_{n_{k-1}}&&&&\\
& &  & & &I_{n_{k}}&\textbf{1}_{n_{k}}&&\\
%\hline
&B_1  &    \cdots & &B_{k-1}& &B_k &I_{k-1} & \textbf{1}_{k-1}  \\
\end{array}
\right)
\end{eqnarray*}
where all blocks  except for $I_{i}$, $\textbf{1}_i$ $(i=n_1, ..., n_k, k-1)$ and $B_j (j=1, ..., k)$ are  zero.
 This implies that $\beta_{k+1}$ must be the zero element, and for $1\leq i\leq k$, each $\beta_i$ is of the form
 $$(\underbrace{0,...,0}_{n_1+\cdots+n_k}, \beta_{i, 1}, ..., \beta_{i, k-1})^\top$$ in  $({\Bbb Z}_2)^{n_1+\cdots +n_k+k-1}$. Moreover, one has that
 \begin{equation}\label{rel}
 \begin{cases}
 F_{k-1, 1}=F_{k-1, k}+\beta_{1,1}F_{n_1, n_1+1}+\cdots+\beta_{k, 1}F_{n_k, n_k+1}\\
 \cdots\\
 F_{k-1, k-1}=F_{k-1, k}+\beta_{1,k-1}F_{n_1, n_1+1}+\cdots+\beta_{k, k-1}F_{n_k, n_k+1}
 \end{cases}
 \end{equation}
Comparing with the formula (\ref{sw class}) again, one should have that
$$\prod_{i=1}^k(1+F_{k-1, i})=\prod_{i=1}^k(1+a_i+e)=\prod_{i=1}^k(1+F_{n_i, n_i+1}+e).$$
Without the loss of generality, assume that $1+F_{k-1, i}=1+F_{n_i, n_i+1}+e$ for $1\leq i\leq k$. Then for $i=k$, one has that
$e=F_{k-1, k}+F_{n_k, n_k+1}$,
and for $1\leq i<k$, one has by (\ref{rel}) that
$$\beta_{1,i}F_{n_1, n_1+1}+\cdots+\beta_{k, i}F_{n_k, n_k+1}=F_{n_i, n_i+1}+F_{n_k, n_k+1}$$
so $\beta_{i, i}=\beta_{k, i}=1$ and $\beta_{j, i}=0$ if $j\not=i, k$ since $F_{n_1, n_1+1}, ..., F_{n_k, n_k+1}$ are linearly independent in  $H^1({\Bbb R}P(n_1, ..., n_k);{\Bbb Z}_2)$. This completes the proof.
\end{proof}

If there is a minimal integer $i$ with $1\leq i< k$ such that $n_i>0$ but $n_{i+1}=0$ (so $n_j=0$ for $j\geq i+1$), then a similar argument as above gives
\begin{prop} \label{mp}
Suppose that there is some $i$ with  $1\leq i< k$ such that $n_1\geq \cdots \geq n_{i}>0$ and $n_{i+1}=\cdots=n_k=0$.
Up to  automorphisms of $({\Bbb Z}_2)^{n_1+\cdots +n_{i}+k-1}$, the characteristic matrix $\Lambda_1^{(n_1, ..., n_i, 0, ..., 0)}$ of ${\Bbb R}P(n_1, ..., n_i, 0, ..., 0)$ may be written as
\begin{eqnarray*}
\left(
\begin{array}{ccccccc}
I_{n_1}  &\textbf{1}_{n_1} &    &&&\\
&        &   \ddots & &&& \\
& &    & I_{n_{i}} &\textbf{1}_{n_{i}}&&\\
%\hline
&J_{1}  &    \cdots & &J_{i}  &I_{k-1} & \textbf{1}_{k-1}  \\
\end{array}
\right)
\end{eqnarray*}
with only blocks   $I_{j}$, $\textbf{1}_j$ $(j=n_1, ..., n_i, k-1)$ and $J_l (l=1, ..., i)$ being nonzero,  and $0$ otherwise,
where $I_{j}$,  $J_l$  and $\textbf{1}_j$ represent the same meanings as stated in Proposition~\ref{matrix}.
\end{prop}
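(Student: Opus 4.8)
\textbf{Proof proposal for Proposition~\ref{mp}.}

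The plan is to mimic the proof of Proposition~\ref{matrix} almost verbatim, with the single structural change that here the last $k-i$ simplices are just points, so the facets $F_{n_{i+1},\cdot},\dots,F_{n_k,\cdot}$ do not exist and the characteristic matrix has $n_1+\cdots+n_i+2k-1$ columns rather than $n_1+\cdots+n_k+2k$. First I would fix the same ordering of facets of $P=\Delta^{n_1}\times\cdots\times\Delta^{n_i}\times\Delta^{k-1}$ as in Proposition~\ref{matrix} (the $\Delta^0$ factors contributing nothing), and normalize $\lambda_1^{(n_1,\dots,n_i,0,\dots,0)}$ on the $n_1+\cdots+n_i+k-1$ facets meeting at a chosen vertex to be the columns of the identity matrix $I_{n_1+\cdots+n_i+k-1}$. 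It then remains to compute $\lambda_1$ on the $i+1$ remaining facets $F_{n_1,n_1+1},\dots,F_{n_i,n_i+1},F_{k-1,k}$, and by \cite[Lemma 6.2]{lt} each such value is the sum of the already-normalized columns plus a correction term $\beta_j$, with $\beta_j$ supported (by the same Stanley--Reisner argument via $a_j^{n_j+1}=0$) on the last $k-1$ coordinates.

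Next I would run the Stiefel--Whitney class comparison exactly as before. From \cite[Theorem 4.14]{dj} one writes $H^*(\mathbb{R}P(n_1,\dots,n_i,0,\dots,0);\mathbb{Z}_2)$ with generators $F_{n_j,\cdot}$ and $F_{k-1,\cdot}$, and \cite[Corollary 6.8]{dj} gives its total Stiefel--Whitney class as the product $\prod_{j=1}^i\big(\prod_{l=1}^{n_j+1}(1+F_{n_j,l})\big)\prod_{j=1}^k(1+F_{k-1,j})$. Comparing with \eqref{sw class}—noting that for the point factors $n_j=0$ the bundle $\gamma_j$ is trivial, so $a_j=0$ and the factor $(1+a_j)^{n_j+1}(1+a_j+e)$ reduces to $(1+e)$—forces $F_{n_j,1}=\cdots=F_{n_j,n_j+1}=:a_j$ for $1\leq j\leq i$, forces $\beta_{k+1}=0$ (i.e. $F_{k-1,k}$ contributes the pure $\Delta^{k-1}$ part with no correction), and gives the relation $\prod_{j=1}^k(1+F_{k-1,j})=\prod_{j=1}^k(1+a_j+e)$ where now $a_j=0$ for $j>i$. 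Reading off coefficients and using that $a_1,\dots,a_i$ together with $e=F_{k-1,k}+ (\text{suitable }a_j\text{ or }0)$ are linearly independent in $H^1$ pins down $\beta_{j,j}=1$ for $1\leq j\leq i$ and the appropriate $J_l$-pattern, while the columns $F_{k-1,1},\dots,F_{k-1,k-1}$ attached to the last $\Delta^{k-1}$ assemble into the block $(I_{k-1}\ \textbf{1}_{k-1})$ as claimed. This yields precisely the displayed matrix.

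I do not expect any genuine obstacle here; the only point requiring minor care is the bookkeeping of which $\beta$-pattern survives once several of the $a_j$ vanish, since the linear-independence argument that was used to isolate $\beta_{j,i}=\beta_{k,i}$ in Proposition~\ref{matrix} now involves fewer generators $F_{n_j,n_j+1}$ (only $j\leq i$), and one must check that the relation $\prod_j(1+F_{k-1,j})=\prod_j(1+a_j+e)$ still has enough independent terms to force the conclusion—it does, because $e$ itself is one of the surviving independent degree-one classes. The phrase ``a similar argument as above'' in the statement is, I believe, entirely justified, and in a fully written proof I would simply indicate the three modifications (fewer columns, the vanishing $a_j$ for $j>i$, the truncated linear-independence argument) and refer back to the proof of Proposition~\ref{matrix} for the rest.
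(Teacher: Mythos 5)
Your proposal matches the paper: the paper gives no separate proof of Proposition~\ref{mp} beyond the phrase ``a similar argument as above,'' and your adaptation of the proof of Proposition~\ref{matrix} (dropping the nonexistent facets of the point factors, reducing the factors $(1+a_j)^{n_j+1}(1+a_j+e)$ to $(1+e)$ for $j>i$, obtaining $e=F_{k-1,k}$ with no correction term, and rerunning the linear-independence argument with $a_1,\dots,a_i$) is exactly that argument. The only slip is bookkeeping: the polytope $\Delta^{n_1}\times\cdots\times\Delta^{n_i}\times\Delta^{k-1}$ has $n_1+\cdots+n_i+i+k$ facets (hence columns of the characteristic matrix), not $n_1+\cdots+n_i+2k-1$ (these agree only when $i=k-1$), but this does not affect the argument.
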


\section{Proof of Main Result}\label{m-r}

\subsection{Examples of specially omnioriented quasitoric manifolds} Throughout the following,  for a $k$-dimensional simplex $\Delta^k$,  $\Delta^{(k)}_i, i=1, ..., k+1$ mean the $k+1$ facets of $\Delta^k$, and for a product $P=\Delta^{k_1}\times\cdots\times\Delta^{k_r}$ of simplices,
$F_{k_i, j}$ means that the facet $\Delta^{k_1}\times\cdots\times\Delta^{k_{i-1}}\times \Delta^{(k_i)}_j\times\Delta^{k_{i+1}}\times\cdots\times\Delta^{k_r}$ of $P$. Then let us construct some required examples.

\begin{exam} Let $P^{4l+5}=\Delta^2\times \Delta^{4l+3}$ with $l\geq 0$. Define a characteristic function $\lambda_2^{(2, 0, ..., 0)}$ on $P^{4l+5}$ in the following way.  First let us fix an ordering of all facets of $P^{4l+5}$ as follows
$$F_{2, 1}, F_{2, 2}, F_{2, 3}, F_{4l+3, 1}, ..., F_{4l+3, 4l+3}, F_{4l+3, 4l+4}.$$
 Then we construct the characteristic matrix $\Lambda_2^{(2, 0, ..., 0)}$ of the required characteristic function $\lambda_2^{(2, 0, ..., 0)}$ on the above ordered facets as follows:
\begin{eqnarray*}
\Lambda_2^{(2, 0, ..., 0)}=
\left(
\begin{array}{cccc}
I_2  &\widetilde{\textbf{1}}_2 &    &\\
%\hline
&J_1  &      I_{4l+3} & \widetilde{\textbf{1}}_{4l+3}  \\
\end{array}
\right)
\end{eqnarray*}
with only blocks   $I_{j}$, $\widetilde{\textbf{1}}_j$ $(j=2, 4l+3)$ and $J_1$ being nonzero,  and $0$ otherwise,
where $I_{j}$ and $J_1$ denote the same meanings as in Proposition~\ref{matrix}, and $\widetilde{\textbf{1}}_j$ denotes the matrix of size $j\times 1$ with $(i, 1)$-entries for all even $i$ being $-1$ and other entries being $1$.
We see that the sum of all entries of each column in the characteristic matrix $\Lambda_2^{(2, 0, ..., 0)}$ is always 1. Thus, by Proposition~\ref{special}, one has that the quasitoric manifold
 $M(P^{4l+5}, \lambda_2^{(2, 0, ..., 0)})$ with the given omniorientation is a special unitary manifold.
\end{exam}

\begin{exam}
Let $P^{8l+11}=\Delta^4\times \Delta^2\times \Delta^{8l+5}$ with $l\geq 0$. In a similar way as above, fix an ordering of all facets of $P^{8l+11}$ as follows:
$$F_{4, 1}, F_{4,2}, F_{4, 3}, F_{4,4}, F_{4,5}, F_{2, 1}, F_{2, 2}, F_{2, 3}, F_{8l+5, 1}, ..., F_{8l+5, 8l+5}, F_{8l+5, 8l+6}.$$
Then we define a characteristic function $\lambda_2^{(4,2, 0, ..., 0)}$ on the above ordered facets of $P^{8l+11}$ by the following characteristic matrix
\begin{eqnarray*}
\Lambda_2^{(4,2, 0, ..., 0)}=
\left(
\begin{array}{ccccccc}
I_4  &\widetilde{\textbf{1}}_4 &   & &&\\
& &     I_2 &\widetilde{\textbf{1}}_2&&&\\
&J_{1}   & &J_2&  &I_{8l+5} & \widetilde{\textbf{1}}_{8l+5}  \\
\end{array}
\right)
\end{eqnarray*}
with only blocks   $I_{i}$, $\widetilde{\textbf{1}}_i$ $(i=2, 4, 8l+5)$ and $J_j (j=1,2)$ being nonzero,  and $0$ otherwise,
where $I_{i}$, $J_j$  and $\widetilde{\textbf{1}}_i$ denote  the  same meanings as above.  By Proposition~\ref{special},
 $M(P^{8l+11}, \lambda_2^{(4,2, 0, ..., 0)})$ with the given omniorientation is a special unitary manifold.
\end{exam}

\begin{exam} \label{7}
The case in which $n=7$. Consider the  polytope $P^7=\Delta^4\times \Delta^3$ with the following ordered  facets
$$F_{4, 1}, F_{4, 2}, F_{4, 3}, F_{4, 4}, F_{4, 5}, F_{3, 1}, F_{3, 2}, F_{3, 3}, F_{3, 4}.$$
Then we may define a characteristic function $\lambda_2^{<7>}$ on the  ordered facets of $P^7$ by the following characteristic matrix
\begin{eqnarray*}
\left(
\begin{array}{ccccccccc}
1 &&&&1&&&&\\
&1 & & &-1 &&&&    \\
 && 1& & 1&&&& \\
 &&&1& -1 &&&&\\
 &&&&1& 1 & & & 1\\
 &&&&&&1&&-1\\
 &&&&&&&1& 1\\
\end{array}
\right),
\end{eqnarray*}
which gives a special unitary manifold $M(P^7, \lambda_2^{<7>})$. Moreover, by the Davis--Januszkiewicz theory, we may read off the cohomology of $M(P^7, \lambda_2^{<7>})$ as follows:
$$H^*(M(P^7, \lambda_2^{<7>}))={\Bbb Z}[x,y]/<x^5, y^4+xy^3>$$
with $\deg x=\deg y=2$, and by~\cite[Theorem 4.8]{dj} and~\cite{bpr}, the total Chern class of $M(P^7, \lambda_2^{<7>})$ may be written as
$$c(M(P^7, \lambda_2^{<7>}))=(1-x^2)^2(1+x)(1-x-y)(1-y^2)(1+y).$$
A direct calculation gives the Chern number $\langle c_3c_4, [M(P^7, \lambda_2^{<7>})]\rangle=-2\not=0$, which implies that this specially omnioriented quasitoric manifold $M(P^7, \lambda_2^{<7>})$
is not bordant to zero in $\Omega_*^U$.
\end{exam}

\begin{exam}\label{8}
The case in which $n=8$. Consider the  polytope $P^8=\Delta^3\times \Delta^5$ with the ordered  facets as follows:
$$F_{3, 1}, F_{3, 2}, F_{3, 3}, F_{3, 4}, F_{5, 1}, F_{5, 2}, F_{5, 3}, F_{5, 4}, F_{5, 5}, F_{5, 6}.$$
Then we may define a characteristic function $\lambda_2^{<8>}$ on the  ordered facets of $P^8$ by
\begin{eqnarray*}
\left(
\begin{array}{cccccccccc}
1 &&&1&&&&&&\\
&1 & & -1&& &&&&    \\
 && 1&  1&&&&&& \\
 &&&-1& 1 &&&&&1\\
 &&&1&  &1 & &&& -1\\
 &&&&&&1&&&1\\
 &&&&&&&1&& -1\\
 &&&&&&&&1 & 1\\
\end{array}
\right),
\end{eqnarray*}
which also gives a special unitary manifold $M(P^8, \lambda_2^{<8>})$. Similarly,  one has the cohomology of $M(P^8, \lambda_2^{<8>})$
$$H^*(M(P^8, \lambda_2^{<8>}))={\Bbb Z}[x,y]/<x^4, y^4(x-y)^2>$$
with $\deg x=\deg y=2$, and the total Chern class of $M(P^8, \lambda_2^{<8>})$
$$c(M(P^8, \lambda_2^{<8>}))=(1-x^2)^2(1-y^2)^2[1-(x-y)^2].$$
Furthermore, one has  the Chern number $\langle c_4^2, [M(P^8, \lambda_2^{<8>})]\rangle=4\not=0$. So  $M(P^8, \lambda_2^{<8>})$
is not bordant to zero in $\Omega_*^U$.
\end{exam}

\begin{exam}\label{12}
The case in which $n=12$. Consider the  polytope $P^{12}=\Delta^3\times \Delta^9$ with the ordered  facets as follows:
$$F_{3, 1}, F_{3, 2}, F_{3, 3}, F_{3, 4}, F_{9, 1}, F_{9, 2}, F_{9, 3}, F_{9, 4}, F_{9, 5}, F_{9, 6}, F_{9, 7}, F_{9, 8}, F_{9, 9}, F_{9, 10}, $$
and define a characteristic function $\lambda_2^{<12>}$ on the  ordered facets of $P^{12}$ by the  matrix
\begin{eqnarray*}
\left(
\begin{array}{cccccccccccccc}
1 &&&1&&&&&&&&&&\\
&1 & & -1&& &&&& &&&&   \\
 && 1&  1&&&&&& &&&&\\
 &&&-1& 1 &&&&&&&&&1\\
 &&&1&  &1 & &&&&&&& -1\\
 &&&&&&1&&&&&&&1\\
 &&&&&&&1&&&&&& -1\\
 &&&&&&&&1 &&&&& 1\\
 &&&&&&&&&1&&&&-1\\
 &&&&&&&&&&1&&&1\\
 &&&&&&&&&&&1&&-1\\
 &&&&&&&&&&&&1&1\\
\end{array}
\right),
\end{eqnarray*}
from which one obtains a special unitary manifold $M(P^{12}, \lambda_2^{<12>})$ with its  cohomology
$$H^*(M(P^{12}, \lambda_2^{<12>}))={\Bbb Z}[x,y]/<x^4, y^8(x-y)^2> \text{\rm with }\deg x=\deg y=2$$ and with its total Chern class
$$c(M(P^{12}, \lambda_2^{<12>}))=(1-x^2)^2(1-y^2)^4[1-(x-y)^2].$$
Then one has that the 6-th Chern class $c_6=-10y^6+12xy^5-26x^2y^4+16x^3y^3$,  so the Chern number $\langle c_6^2, [M(P^{12}, \lambda_2^{<12>})]\rangle=64\not=0$. Thus  $M(P^{12}, \lambda_2^{<12>})$
is not bordant to zero in $\Omega_*^U$.
\end{exam}

\subsection{Proof of Theorem~\ref{main}} Obviously,  the mod 2 reductions of the characteristic matrices $\Lambda_2^{(2, 0, ..., 0)}$ and
$\Lambda_2^{(4,2, 0, ..., 0)}$ of $M(P^{4l+5}, \lambda_2^{(2, 0, ..., 0)})$ and $M(P^{8l+11}, \lambda_2^{(4,2, 0, ..., 0)})$ are
\begin{eqnarray*}
\left(
\begin{array}{cccc}
I_2  &\textbf{1}_2 &    &\\
%\hline
&J_1  &      I_{4l+3} & \textbf{1}_{4l+3}  \\
\end{array}
\right)
\end{eqnarray*}
and
\begin{eqnarray*}
\left(
\begin{array}{ccccccc}
I_4  &\textbf{1}_4 &   & &&\\
& &     I_2 &\textbf{1}_2&&&\\
&J_{1}   & &J_2&  &I_{8l+5} & \textbf{1}_{8l+5}  \\
\end{array}
\right)
\end{eqnarray*}
respectively. Thus, by Proposition~\ref{mp}, one has that the fixed point sets of the conjugation involutions on $M(P^{4l+5}, \lambda_2^{(2, 0, ..., 0)})$ and $M(P^{8l+11}, \lambda_2^{(4,2, 0, ..., 0)})$ are homeomorphic to the Stong manifolds
${\Bbb R}P(2, \underbrace{0, ..., 0}_{4l+3})$ and ${\Bbb R}P(4, 2, \underbrace{0, ..., 0}_{8l+4})$, respectively. Thus,
the subring of $\Omega_*^U$ generated by the unitary bordism classes $\beta_{8l+10}, \beta_{16l+22}$ of $M(P^{4l+5}, \lambda_2^{(2, 0, ..., 0)})$ and $M(P^{8l+11}, \lambda_2^{(4,2, 0, ..., 0)})$ is mapped onto the subring ${\Bbb Z}_2[\alpha_{4l+5}, \alpha_{8l+11}|l\geq 0]$ of $\mathfrak{N}_*$ in Lemma~\ref{subring} via $H_*: \Omega^U_{*}\longrightarrow \mathfrak{N}_*$. This means that any non-trivial polynomial in $\beta_{8l+10}$ and $\beta_{16l+22}$
is nonzero in $\Omega^U_{*}$ since its image under $H_*$ is  nonzero by Lemma~\ref{subring}, so we obtain the examples of non-bounding specially omnioriented quasitoric $(2n)$-manifolds with $n\not=1,2,3,4, 6, 7, 8, 12$.

For $n=7, 8, 12$, Examples~\ref{7}--\ref{12} directly provide  three non-bounding specially omnioriented quasitoric manifolds.
This completes the proof of Theorem~\ref{main}.
\hfill$\Box$

\begin{rem}

A counterexample in the case $n=6$ was recently discovered from a joint work~\cite{lp} of the first author with Taras Panov,  concerning the toric generators in the unitary and special unitary bordism rings.

\end{rem}

{\bf Acknowledgements}. The authors  express their gratitude to the referees,
who read this paper very carefully and gave many valuable suggestions and comments.

\end{document}